      \theoremstyle{plain}
      \newtheorem{theorem}{Theorem}[section]
      \newtheorem{lemma}[theorem]{Lemma}
      \newtheorem{proposition}[theorem]{Proposition}
      \newtheorem{remark}[theorem]{Remark}
      \newtheorem{definition}[theorem]{Definition}      
\numberwithin{equation}{section}
      \def\@setcopyright{}
      \def\serieslogo@{}
\def\M{\EuScript{M}}
\def\R{\mathbb R}
\def\Z{\mathbb Z}
\def\N{\mathbb N}
\def\a{\alpha}
\def\dist{\text{dist}}
\def\diam{\text{diam}}
\def\Id{\text{Id}}
\def\e{\varepsilon}
\def\Ci{C^\infty}
\def\xf{{\lambda}}
\def\xs{{\mu}}
\def\bw{\bar{U}}
\def\f{\bar f}
\def\B{\mathcal B}
\def\G{\mathcal G}
\def\S{\mathcal S}
\def\po{{\mathcal{R}}}
\def\s{{\mathcal{S}}}
\def\c{{\mathcal{C}}}
\def\n{{\mathcal{N}}}
\def\E{{\mathcal{E}}}
\def\V{{\mathcal{V}}}
\def\f{{\mathcal {F}}}
\def\fd{{F}}
\def\p{{\mathcal {P}}}
\def\pd{{P}}
\def\h{{\mathcal {H}}}
\def\hd{H}
\def\QED{\hfill\hfill{\square}}
\begin{document}

%\date{\today}
\author{Boris Kalinin and Victoria Sadovskaya$^{\ast}$}

\address{Department of Mathematics, The Pennsylvania State University, University Park, PA 16802, USA.}
\email{kalinin@psu.edu, sadovskaya@psu.edu}

\title [Normal forms on contracting foliations]
{Normal forms on contracting foliations: smoothness and homogeneous structure} 

%\thanks{$^{\ast}$ Supported in part by NSF grant DMS-1101150}
\thanks{$^{\ast}$ Supported in part by NSF grant DMS-1301693}

%\thanks{$^{\ast}$  Supported in part by NSF grant DMS-0701292}

%%%%%%%%%%%%%%%%%%%%%%%%%%%%%%%

\begin{abstract}
In this paper we consider a diffeomorphism $f$ of a compact manifold 
$\M$ which contracts  an invariant foliation $W$ with smooth leaves.  
If the differential of $f$ on $TW$ has narrow band spectrum, there exist coordinates $\h _x:W_x\to T_xW$ in which $f|_W$ has polynomial form.
We present a modified approach that allows us to construct maps $\h_x$
that depend smoothly on $x$ along the leaves of $W$. Moreover, we show 
that on each leaf they give a coherent atlas with transition maps in a finite dimensional Lie group. Our results apply, in particular, to $C^1$-small perturbations of algebraic systems.

\end{abstract}

\maketitle

%%%%%%%%%%%%%%%%%%%%%%%%%%%%%%%%%%%%

\section{Introduction}

The theory of normal forms of maps plays an important role in dynamics
and goes back to the works of Poincare and Sternberg \cite{St}.
Normal forms at fixed points and invariant manifolds have been extensively 
studied \cite{BK}. More recently, the theory of non-stationary linearizations
and, more generally, normal forms was developed in the context of diffeomorphisms with contracting foliation \cite{KL,GK,G,F2,S}. 
It proved useful in the study of smooth dynamics and rigidity for dynamical 
systems and group actions exhibiting various forms of hyperbolicity,
see for example \cite{KSp97,KS03,KS,Fa,FFH,GKS10,KKtRH10,FKSp10}.  

Let $f$ be a $C^\infty$ diffeomorphism of a compact smooth manifold $\M$, 
and let 
$W$ be a continuous invariant foliation with $C^\infty$ leaves which is contracted 
by $f$, that is $\| Df|_{TW}\|<1$ in some Riemannian metric. The goal of the normal form theory in this setting is
to find a  family of diffeomorphisms $\h_x: W_x \to T_xW$ such that the maps 
\begin{equation} \label{form}
 \tilde f_x =\h_{fx} \circ f \circ \h_x ^{-1}: \;T_x W \to T_{fx}W
\end{equation}
 are as simple  as possible, for example linear maps or polynomial maps in a finite dimensional Lie group. The maps $\h_x$  should depend well on $x$ and ideally form a good atlas on each leaf of $W$.
 For technical reasons,
 it is often easier to operate with marked leaves, which can be identified with the
 tangent spaces $T_xW$, producing an extension of the original system to $TW$.
 The results in \cite{KL,GK,G} are stated in such a setting, but most applications 
come from the foliated systems.

Non-stationary linearization, i.e. existence of $\h_x$ so that $\tilde f_x$ 
in \eqref{form}
are linear,  was first established by Katok and Lewis for one-dimensional $W$  \cite{KL}. For higher-dimensional foliations under a 
uniform 1/2 pinching assumption,  non-stationary linearization 
follows from  results of Guysinsky and Katok
\cite{GK} or  from results of Feres in \cite{F1}, where a differential geometric point of  view was developed. Under a weaker assumption of pointwise 1/2 pinching, it was  obtained in \cite{S}
and additional properties were established in \cite{KS}. These results 
are summarized below.
\vskip.2cm

\noindent {\bf Non-stationary linearization.} 
{\it Suppose that $\| Df|_{TW}\|<1$, and there exist
$C>0$ and $\gamma<1$ such that    for all $x\in \M$ and $n\in\mathbb N$,
 $$ \|\,(Df^n|_{T_xW})^{-1}\,\| \cdot \|\,Df^n|_{T_xW}\,\|^2 
     \leq C\gamma^n.$$
  Then for every $x\in \M$ there exists a $C^\infty$ diffeomorphism 
$\h_x: W_x \to T_xW$ such that
\begin{itemize}
 \item[(i)] $\h_{fx}\circ f \circ \h_x^{-1}=Df|_{T_xW} ,$ 
 \vskip.05cm
 \item[(ii)] $\h_x(x)=0$  and $D_x\h_x$ is the identity map, 
 \vskip.05cm  
 \item[(iii)] $\h_x$ depends continuously on $x\in \M$ in $C^\infty$ topology,
  \vskip.05cm
 \item[(iv)]  Such a family $\h_x$ is unique and 
depends smoothly on $x$ along the leaves of $W$,
 \vskip.05cm
  \item[(v)] The map $\h_y \circ \h_x^{-1}: T_xW\to T_yW$
is affine for any $x\in \M$ and $y\in W_x$. Hence the non-stationary linearization $\h$ defines affine structures on the leaves of $W$.
\end{itemize}
}
\vskip.2cm

%? This result proved  useful cite{?}, but its scope of applicability is limited.
For  higher-dimensional $W$ without 1/2 pinching there may be no smooth
non-stationary linearization, and so a polynomial normal form is sought.
Under the {\em narrow band spectrum} assumption, such  forms 
were introduced in \cite{GK,G}. The assumption is satisfied, for example, by perturbations  of algebraic systems. It ensures that the polynomial maps 
involved belong to a finite dimensional Lie group, which is important for applications.
In  \cite{GK,G} Katok and Guysinsky proved existence of  normal form coordinates $\{ \h _x\}$
as well as a centralizer theorem. This was sufficient for some applications, 
such as local rigidity of higher rank actions \cite{KSp97}. However, smooth dependence of $\h _x$ along the leaves of $W$ was obtained under a strong 
extra assumption that 
the splitting of $TW$ into the spectral subspaces is smooth along $W$, 
rather than just continuous. This assumption is not typically satisfied by
perturbations of algebraic systems. There were also no results on the 
coherence of maps $\h _x$ on the leaves of $W$. A geometric point of 
view on normal forms was developed by Feres in \cite{F2}, where he established
existence of an $f$-invariant infinitesimal structure. If this structure, a certain generalized connection, is smooth then it can be integrated to recover normal 
forms. The smoothness, however, again relied on the smoothness of the 
spectral splitting. 

In this paper we overcome these difficulties and obtain a system of normal 
form coordinates $\{ \h _x\}$ which are smooth along the leaves of $W$ and 
on each leaf give a coherent atlas with transition maps in a finite dimensional 
Lie group. This gives homogeneous space structures on the leaves of $W$ 
which are invariant under $f$. Our results hold for any narrow band spectrum 
system without any extra assumptions. In particular, they apply to
 perturbations of algebraic systems. 
%We note that while smoothness and coherency issues are related, there is noimmediate implication in either direction. 
We also note that, in contrast to the case of the non-stationary linearization, 
%the smoothness of $\h_x$ along $W$ does not follow from coherency and
 the system $\{ \h _x\}$ is not unique. In fact, the construction in  \cite{GK,G} produces $\h_x$ which may not be smooth if the spectral splitting in not. 
 We give a modified construction that allows us to obtain the desired properties of $\h_x$. 
%Our construction also allows us to streamline the proof of smoothness using $C^r$-section Theorem and avoid technical difficulties, ? which have not been written in detail ? which are not quite addressed in the (outlines of the) proof in \cite{GK,G}.

%%%%%%%%%%%%%%%%%%%%%%%%%%%%%%
%%%%%%%%%%%%%%%%%%%%%%%%%%%%%%

\section{Definitions and Results}

Let $\M$ be a smooth compact connected manifold and let $f$ be a $\Ci$
diffeomorphism of $\M$. Let $W$ be an $f$-invariant topological foliation 
of $\M$ with uniformly $\Ci$ leaves. The latter means that all leaves are $\Ci$
submanifolds and that all their derivatives are also continuous transversally to the 
leaves. Slightly more generally, we can consider a  homeomorphism $f$ 
which is uniformly smooth along the leaves of $W$.
We assume that $f$ contracts $W$, i.e. $\| Df |_{TW}\|<1$ in some metric.

Let $\E = TW$ be the tangent bundle of the foliation $W$. 
We denote by $F$ the automorphism of $\E$ given by the
derivative of $f$: 
$$F_x = Df|_{T_xW} : \,\E_x \to \E_{fx}.$$
$F$ induces a bounded linear operator $F^*$ on the space of continuous sections 
of $\E$ by $F^*v(x)=F(v(f^{-1}x))$. The spectrum of complexification of $F^*$
is called {\em Mather spectrum} of $\fd$. Under a mild assumption that 
non-periodic points of $f$ are dense in $\M$, Mather spectrum
consists of finitely many closed annuli centered at $0$, see e.g. \cite{P}.

\begin{definition}  \label{NBdef}
We say that $\fd$ has {\em narrow band spectrum} if 
 its Mather spectrum is contained in a finite union of closed annuli $A_i$, 
 $i=1, \dots, l$, 
bounded by circles of radii $e^{\xf_i}$ and $e^{\xs_i}$, 
where the numbers 
$\xf_1 \le \xs _1 <\dots<\xf_l \le \xs _l<0$ satisfy
\begin{equation} \label{narrow band}
\qquad \xs_i + \xs_l < \xf_i \quad \text{for }\, i=1, \dots, l.
\end{equation}
\end{definition}

This condition  can be written as $\xs_i  - \xf_i   < - \xs_l$ for $i=1, \dots, l,\,$  so it means that the length of each of the intervals 
$[ \xf_i , \xs_i ]$ is smaller than that of $[\xs_l , 0]$. When there is only one spectral interval, \eqref{narrow band} is the uniform 1/2 pinching condition 
which yields non-stationary linearizations and was 
used in \cite{F1} to construct an invariant affine connection.

For the given spectral intervals $\{[\lambda_i, \mu_i]\}$,
 the bundle $\E$ splits into a direct sum 
\begin{equation} \label{splitting}
\E=\E^{1} \oplus \dots \oplus \E^{l}
\end{equation} 
of continuous $F$-invariant sub-bundles so that  Mather spectrum of
$F|_{\E^i}$ is contained in the annulus $A_i$.  
%We denote by $t=(t_1,\dots,t_l)$ the corresponding splitting of a vector  $t \in \Ex$ into the components. 
%? Since the splitting is invariant, the linear map $\fd (x): \E_{x} \to \E_{fx}$  has ``block-diagonal" form 
% $$\fd_x= \fd^1_x \oplus \dots  \oplus \fd^l_x, \quad\text{where} \quad \fd^i (x): \E^i_{x} \to \E^i_{fx}.$$
This can be expressed using a convenient metric \cite{GK}:  for each 
$i=1,..., l$ and each $\e >0$ there exists a continuous metric 
$\|.\|_{x,\e}$ on $\E^i$ such that
 \begin{equation} \label{nbrate} 
  e^{\xf _i -\e} \| t \|_{x,\e} \le  \| \fd _x (t)\|_{fx,\e} \le e^{\xs _i +\e} \| t\|_{x,\e}
  \quad\text{for every } t \in \E^i_x.
 \end{equation}
% It is well-known that for each $\e >0$ there exists a continuous ``adapted" metric $\|.\|_{x,\e}$ on $\E$ such that for $k=1, ..., l,$  \begin{equation} \label{nbrate}   e^{\xf _k -\e} \| t_k \|_{x,\e} \le  \| \fd _x (t_k)\|_{fx,\e} \le e^{\xs _k +\e} \| t_k\|_{x,\e}. \end{equation}

\begin{definition} \label{SRrel}
A  {\em sub-resonance relation} for 
$\,(\xf,\xs)=(\xf_1, \dots, \xf_l,\, \xs_1, \dots, \xs_l )\,$ with $\xf_1 \le \xs _1 <\dots<\xf_l \le \xs _l<0\,$ is a relation of the form 
\begin{equation}\label{sub-resonance}
\xf_i\le\sum s_j \xs_j, \quad\text{where
$s_1,\dots,s_l\,$ are non-negative integers.}
\end{equation}
\end{definition}

%(Relations $\xf_i \le \xs_j$, $i\le j$, are called trivial and the other ones non-trivial.)
Clearly, $s_j=0$ for $j<i$, and $\sum s_j \le \lambda_1/ \mu_l$.
The narrow band condition \eqref{narrow band} implies that if
$s_i\ne 0$, then $s_i=1$ and $s_j=0$ for $j>i$, and hence
all sub-resonance relations are of the form 
$
 \xf_i \le \xs_i \,\text{ or } \,\xf_i\le\sum_{j \ge i+1} s_j \xs_j.
$

%\noindent {\bf Polynomial maps.} 
For any vector spaces $E$ and $\bar E$ we say that
 a map $P :E \to \bar E$ is polynomial if for some, and hence every, bases of $E$ and $\bar E$  each component of $P$ is a polynomial. A polynomial  map
 $P$ is homogeneous of degree $n$ if $P(a v)=a^n P(v)$ 
 for all $v \in E$
 and $a \in \R$. More generally, for a given splitting $E=E^{1} \oplus \dots \oplus E^{l}$ we say that $P :E \to \bar E$ has homogeneous type $s= (s_1, \dots , s_l)$ if
for any real numbers $a_1, \dots , a_l$ and vectors
$t_j\in E^j$, $j=1,\dots, l,$ we have 
\begin{equation}\label{stype}
P(a_1 t_1+ \dots + a_l t_l)= a_1^{s_1} \cdot ... \cdot a_l^{s_l} \, P( t_1+ \dots + t_l).
\end{equation}
Any polynomial map can be written uniquely as a linear combination of terms
of specific homogeneous types. 

\begin{definition} \label{SRdef}
Suppose $E=E^{1} \oplus \dots \oplus E^{l}$, 
$\bar E=\bar E^{1} \oplus \dots \oplus \bar E^{l}$
and  $P : E \to \bar E$ is  a polynomial map. We split $P$ into
components $P_i : E \to \bar E^i$, $P=(P_1,\dots,P_l)$.
We say that $P$ is of $(\xf,\xs)$
{\em sub-resonance type} if each component $P_i$ has only terms of homogeneous types 
$s= (s_1, \dots , s_l)$  satisfying sub-resonance relations $\xf_i\le\sum s_j \xs_j$.

We denote by $\s^{\xf,\xs}(E, \bar E)$ the space of all polynomials $E\to \bar E$ 
of $(\xf,\xs)$ sub-resonance type.

\end{definition}
It follows from the definition that polynomials in $\s^{\xf,\xs}(E, \bar E)$ have
degree at most 
\begin{equation}\label{degree}
d=d(\xf,\xs)= \lfloor \xf_1/\xs_l \rfloor.
\end{equation}
It was shown in \cite{GK} that in the case of point spectrum, that is  $\xf_i = \xs_i$ for $i=1, \dots ,l$, the elements of $\s^{\xf,\xf}(E,E)$  with $P(0)=0$ and invertible derivative at the origin  form a group $G^{\xf,\xf}(E)$ with respect to composition. More generally, if $(\xf,\xs)$ satisfies the narrow band condition, they generate (under composition) a finite-dimensional Lie group which we denote by $G^{\xf,\xs}(E)$. The maps in $G^{\xf,\xs}$ are called {\em sub-resonance generated} and can be described by adding finitely many 
relations to the set of sub-resonance ones. In fact, $G^{\xf,\xs}(E)$ 
is contained in $G^{\xf',\xf'}(E)$ for a certain $\xf'$, explicitly written in terms 
of $(\xf,\xs)$  \cite{GK}. This larger group may be used in place of $G^{\xf,\xs}(E)$, for simplicity, in all  arguments.

We return to our setting with fixed $F$, $\E$,  and $(\xf,\xs)$. We denote 
by $\s_{x,y}= S^{\xf,\xs}(\E_x,\E_y)$ the space of sub-resonance polynomials
and by $G_x= G^{\xf,\xs}(\E_x)$ the group of sub-resonance generated polynomial maps.  For any $x$ and $y$ in $\M$, any invertible linear map 
$A:\E_y \to \E_x$ that respects the splitting induces an isomorphism 
between the groups $G_x$ and $G_y$. The set $G_{x,y}$ of invertible
sub-resonance generated polynomial maps from $\E_x$ to $\E_y$ is also
naturally defined either by specifying homogeneity types using relations
or simply  by identifying $\E_x$ with $\E_y$: $G_{x,y} =\{ P : \E_x \to \E_y \; |\; A\circ P \in G_x \}$ for any $A$ as above. 
Of course, if the bundle $\E$ and the sub-bundles $\E^i$ are trivial, then 
all $G_x$ and $G_{x,y}$ can be identified with a single group $G^{\xf,\xs}(E)$.

\newpage

\begin{theorem}\label{Main} 
Let $\M$ be a smooth compact connected manifold and let $f$ be a $\Ci$
diffeomorphism of $\,\M$. Let $W$ be an $f$-invariant topological foliation 
of $\M$ with uniformly $\Ci$ leaves. Suppose that $W$ is contracted by $f$, 
i.e. $\| Df |_{TW}\|<1$ for some metric, and that  $Df |_{TW}$ has narrow band spectrum, see Definition \ref{NBdef}.
\vskip.05cm

Then there exist a family $\{ \h_x \} _{x\in \M}$ of $C^\infty$ diffeomorphisms  
 $$\h_x: W_x \to \E_x =T_xW \quad\text{such that }$$

\begin{itemize}
\item[(i)]  $\p_x =\h_{fx} \circ f \circ \h_x ^{-1}:\E_{x} \to \E_{fx}$ is a polynomial map of sub-resonance type for each $x \in \M$,
\vskip.1cm

\item[(ii)]  $\h_x(x)=0$ and $D_x\h_x $ is the identity map  for each $x \in \M$,
\vskip.1cm

\item[(iii)] $\h_x$ depends continuously on $x \in \M$ in $C^\infty$ topology
 and it depends smoothly on $x$ along the leaves of $W$,
 \vskip.1cm 
 
\item[(iv)]  $\h_y \circ \h_x^{-1} : \E_x \to \E_y$ 
is a sub-resonance generated polynomial for each $x \in \M$ and each $y \in W_x$.

 \end{itemize}

\end{theorem}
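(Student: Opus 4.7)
The plan is to construct $\h_x$ as a limit of iterated polynomial conjugations, following the general scheme of \cite{GK,G}, but with a modification that decouples the construction from the smoothness of the invariant spectral splitting $\E = \E^1 \oplus \dots \oplus \E^l$.

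First I would fix $x \in \M$ and work along the leaf $W_x$. Using the adapted metric from \eqref{nbrate}, pull back $f^n : W_x \to W_{f^n x}$ via the exponential maps to obtain a sequence of smooth maps $\Phi_n : W_x \to \E_x$ with $D_x \Phi_n = \Id$. The narrow band condition \eqref{narrow band} together with the degree bound $d = \lfloor \xf_1/\xs_l \rfloor$ ensure that the jet of $\Phi_n$ of degree $>d$, as well as the non-sub-resonance terms of degree $\le d$, decay exponentially as $n \to \infty$, while the sub-resonance terms of degree $\le d$ stabilize. Recursively absorbing the stabilized sub-resonance jet by a polynomial correction produces a Cauchy sequence whose limit is the desired $\h_x$, giving (i) and (ii).

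The main difficulty is smooth dependence of $\h_x$ along $W$. In the classical construction the polynomial correction at each step uses the projections onto the invariant sub-bundles $\E^i$, so smoothness of $\h_x$ along $W$ inherits from that of the splitting, which need not hold without extra assumptions. My plan is to replace the invariant splitting by a smooth-along-$W$ \emph{reference splitting} $\E = \tilde\E^1 \oplus \dots \oplus \tilde\E^l$, chosen locally so as to match the invariant splitting only pointwise up to continuous error, and to run the iteration using projections onto the $\tilde\E^i$. Because these reference sub-bundles are not $F$-invariant, the polynomial maps $\p_x$ one obtains no longer lie in $G^{\xf,\xs}(\E_x)$ but instead in the enlarged point-spectrum group $G^{\xf',\xf'}(\E_x)$ introduced after Definition \ref{SRdef}. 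The enlarged group carries the same degree bound and the same contraction properties, so the convergence argument is unchanged, while smoothness along $W$ now comes from smoothness of $f$, of the reference splitting, and of the associated projections. Transverse continuity in the $C^\infty$ topology then follows from the uniformity of the estimates and compactness of $\M$, giving (iii).

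For (iv), if $y \in W_x$ then both $\h_x$ and $\h_y$ conjugate $f^n$ to sub-resonance generated polynomial dynamics, and $\h_y \circ \h_x^{-1}$ intertwines the two polynomial cocycles over the orbit. Using that truncated jets of the conjugating map at any fixed order $\le d$ already lie in $G_{x,y}$, together with closedness of $G_{x,y}$ as a finite-dimensional Lie group and the polynomial nature of its elements, one concludes in the $C^\infty$ limit that $\h_y \circ \h_x^{-1} \in G_{x,y}$. The decisive step is the third paragraph: the substitution of a smooth reference splitting for the invariant one, at the cost of enlarging the target group, is exactly what removes the earlier requirement of smoothness of the spectral decomposition along $W$.
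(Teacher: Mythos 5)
Your proposal correctly isolates the obstacle (the invariant splitting $\E=\E^{1}\oplus\dots\oplus\E^{l}$ is in general only continuous along $W$), but the fix you propose does not work as stated. The elimination scheme is driven by the fact that the operator $R\mapsto \fd_x^{-1}\circ R\circ \fd_x$ contracts the non-sub-resonance polynomials (Lemma \ref{contraction}); this uses both that $\fd$ preserves the splitting and that $\fd|_{\E^i}$ is pinched between $e^{\xf_i-\e}$ and $e^{\xs_i+\e}$. If you run the iteration with projections onto a nearby smooth-along-$W$ reference splitting $\tilde\E^{1}\oplus\dots\oplus\tilde\E^{l}$ that is not $\fd$-invariant, the subspace of ``non-sub-resonance polynomials relative to $\tilde\E$'' is no longer invariant under this operator, so there is no fixed-point equation to solve in it, and the rate estimates degrade because $\fd$ mixes the $\tilde\E^i$. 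Enlarging the target to $G^{\xf',\xf'}$ does not repair this: that group is still defined by homogeneity types relative to some splitting, so the same invariance problem recurs, and in any case you would only conclude $\p_x\in G^{\xf',\xf'}$, which is weaker than the sub-resonance-type assertion of (i). The paper's actual resolution is different: Proposition \ref{sr-flag} shows that $\s^{(n)}_x$ depends only on the fast flag $\V^1_x\subset\dots\subset\V^l_x$, which \emph{is} smooth along $W$, hence $\s^{(n)}$ is smooth along $W$ (Lemma \ref{smooth s}); one then solves the cohomological equation \eqref{Pn} in the quotient bundle $\po^{(n)}/\s^{(n)}$ via the $C^r$ Section Theorem, and a smooth auxiliary transversal enters only in the final lifting step, never in the dynamics.

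Two further gaps. The passage from the jet level to an actual diffeomorphism (``a Cauchy sequence whose limit is $\h_x$'') is asserted rather than proved; the paper truncates the formal series at order $N>d$ and runs a separate contraction-mapping argument in a space of $C^N$ maps with prescribed $N$-jet, using $\xf_1>N\xs_l$. For (iv), your claim that the truncated jets of $\h_y\circ\h_x^{-1}$ already lie in $G_{x,y}$ is precisely the hard step: the paper proves it by an inductive growth-rate contradiction along the orbit (lower bound $e^{n(\xf_i-\e)}$ on the sub-resonance-violating image against the upper bound $e^{n\sum s_j(\xs_j+\e)}$), and then separately shows that $\h_y\circ\h_x^{-1}$ coincides with its Taylor polynomial by pushing the infinitely flat difference through $(\p^n_y)^{-1}\circ(\cdot)\circ\p^n_x$. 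Neither step follows from closedness of $G_{x,y}$ as a Lie group.
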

 
Another way to interpret (iv) is to view $\h_x$ as a coordinate chart on 
$W_x$, identifying it with $\E_x$, see Section \ref{foliations} for more details.
%(and in particular $\E_y=T_y W_x$ with $T_{\h_x(y)}\E_x$ by $D_y\h_x $). 
In this coordinate chart, (iv) yields that all transition maps $\h_y \circ \h_x^{-1}$ for $y\in W_x$ are in $\bar G_x$, the 
group generated by $G_x$ and the translations of $\E_x$. Thus $\h_x$ 
gives the leaf a structure of homogeneous space $W_x\sim \bar G_x/G_x$,  
which is consistent with other coordinate charts $\h_y$ for $y\in W_x$ 
and is preserved by the normal form $\p _x$ according to (i).

\begin{remark}
An important and useful feature of normal forms for contractions
was established in \cite{GK}.  Let $g$ be a homeomorphism of $\M$ 
which commutes with $f$,  preserves $W$, and is smooth  along 
the leaves of $W$. Then the maps $\h_x$ bring $g$ to a normal form as well,
i.e. the map $Q_x=\h_{fx} \circ g \circ \h_x ^{-1}$
is a sub-resonance generated polynomial for each $x \in \M$.
% The preservation of $W$ by $g$ is often automatic for natural foliations. 
\end{remark}

%{\it TAKE A LOOK. Degree of P is at most $\xf_1 / \xs_l$.}

%The group $\bar G_x$ generated by $G_x$ and the translations of $\E_x =T_xW$ acts transitively on $\E_x$. If we identify $W_x$ with $\E_x$ by $\h_x$ then all transition maps $\h_y \circ \h_x^{-1}$ for $y\in W_x$ are in $\bar G$ by (iv), see Section \ref{foliations} for more details. Thus we obtain  a consistent homogeneous space structure on the leaf $W_x\sim \bar G_x/G_x$, which is preserved by the normal form $\p _x$ according to (i).
%$\h_y \circ \h_x^{-1}$ are in $\bar G$ by (iv). We conclude that the maps $\h_y$ for $y\in W_x$ define on $W_x$ a consistent structure of $\bar G$ homogeneous space and that the normal form $\p _x$ preserves this structure by (i).

%%%%%%%%%%%%%%%%%%%%%%%%%%%
%%%%%%%%%%%%%%%%%%%%%%%%%%%

\section{Proof of Theorem \ref{Main}}

\subsection{Proof of (i), (ii), (iii)}

Let us fix a small tubular neighborhood $U$ of the zero section in $TW$
such that for each $x$ in $\M$, we can identify the open set 
$U_x=U \cap T_xW$ in $T_xW$ with a neighborhood of $x$ in the leaf
$W_x$ using the exponential map. The size of this neighborhood can 
be chosen the same for all $x\in \M$.
Thus we can view the diffeomorphism $f$ as a family of maps
$\f _x=f|_{U_x}$ from $U_x \subset T_xW$ to $U_{fx} \subset T_{fx}W$. 
Our first goal is to obtain a formal power series for $\h_x$.

For each map $\f_x : \E_{x} \to \E_{fx}$ we consider its formal power series at $t=0$: 
\begin{equation}\label{f_x}
\f_x(t)= \sum_{n=1}^\infty \fd^{(n)}_x(t).
\end{equation}
As a function of $t$ for a fixed $x$, $\fd^{(n)}_x(t) :\E_{x} \to \E_{fx}$ is a vector valued homogeneous polynomial map of degree $n$ (that is,
in any bases of $\E_x$ and $\E_{fx}$, each coordinate in $\E_{fx}$
is a homogeneous polynomial  of degree $n$ of the coordinates in $\E_x$).

First we construct the formal power series at $t=0$ for the desired coordinate change $\h_x(t)$ as well as the finite power series at $t=0$ 
for the resulting polynomial extension $\p_x(t)$. We use notations for these series similar to \eqref{f_x}:
$$\h_x(t)= \sum_{n=1}^\infty \hd^{(n)}_x(t)\quad \text{and} \quad 
 \p_x(t)= \sum_{n=1}^d \pd^{(n)}_x(t).
 $$
We will use notation  $\fd_x=\fd^{(1)}_x$ for the first derivative of $\f$. 
For $\h$ and $\p$ we choose 
$$\hd^{(1)}_x= \Id : \E_x \to \E_x \quad \text{and} \quad 
\pd^{(1)} _x =\fd _x  \quad \text{for all } x \in \M.
$$

We construct the terms $\hd^{(n)}_x$ inductively to ``eliminate" the part of 
$\fd ^{(n)}_x$ which is not of sub-resonance type. More precisely, we ensure
that the terms $\pd^{(n)}_x$ determined by the conjugacy equation are 
of sub-resonance type. The base of the induction is the linear terms chosen above.
We will obtain $\hd^{(n)}_x$ and $\fd ^{(n)}_x$ which are continuous in 
$x$ on $\M$ and smooth in $x$ along the leaves of $W$. 

Using the notations above we can write the conjugacy equation 
$$ \h_{fx}  \circ \f_{x} =\p_{x} \circ \h_{x} $$ 
as follows:
$$
\left( \Id   +\sum_{k=2}^\infty  H^{(k)} _{fx}  \right) \circ
\left( F_x+\sum_{k=2}^\infty F^{(k)} _{x} \right)  
=\left( F_{x} + \sum_{k=2}^\infty P^{(k)}_{x} \right) \circ
\left( \Id +\sum_{k=2}^\infty H^{(k)}_{x} \right)
$$
for all $x\in \M$. Considering the terms of degree $n$,  we obtain for $n=2$
$$
F^{(2)}_{x}+H^{(2)}_{fx}\circ F_{x} = F_{x} \circ H^{(2)}_{x}+P^{(2)}_{x},
$$
and in general for $n\ge 2$
$$
F^{(n)}_{x}\, + \, H^{(n)}_{fx} \circ F(x)\,+ \,
\sum H^{(k)}_{fx} \circ F^{(j)}_{x}
\,= \,F_{x} \circ H^{(n)}_{x}+ P^{(n)}_{x}+\,
\sum P^{(j)}_{x} \circ H^{(k)}_{x},
$$
where the summations are over all $k$ and $ j$ such that $k j=n$ and $ 1<k,j<n$.
We rewrite the equation as 
\begin{equation}\label{Pn}
F_{x}^{-1} \circ P^{(n)}_{x} =  - H^{(n)}_{x}  + F_{x}^{-1} \circ H^{(n)}_{fx} \circ F_{x} + Q_{x}, \;
\end{equation}
where
$$
Q_{x}= F_{x}^{-1} \left( F^{(n)}_{x} + 
\sum_{kj=n, \;\, 1<k,j<n} H^{(k)}_{fx} \circ F^{(j)}_{x}  
-  P^{(j)}_{x} \circ H^{(k)}_{x} \right).$$
We note that $Q_x$ is composed only of terms $H^{(k)}$ and  $P^{(k)}$ with $1<k<n$, which are already constructed, and terms $F^{(k)}$ with $1<k\le n$,
which are given. Thus by the inductive assumption $Q_x$ is continuous in $x$ on $\M$ and smooth along the leaves of $W$.

We denote by $\po_x^{(n)}$ the vector space of all homogeneous polynomial 
maps of  degree $n$ from $\E_x$ to $\E_x$. Identifying these polynomial maps 
with symmetric $n$-linear maps, one can view this space as $Sym_n(\E_x^*) \otimes \E_x$, where the former is the space symmetric elements in the  $n^{th}$ tensor power of the dual space of $\E_x$,
see \cite{F2}.
Let $\po^{(n)}$ be the vector bundle over $\M$ whose fiber at $x$
 is $\po_x^{(n)}$. We denote by  $\s_x^{(n)}$ and $\n_x^{(n)}$ the 
 subspaces of $\po_x^{(n)}$ consisting of sub-resonance and non sub-resonance  polynomials respectively. These subspaces depend continuously 
 on $x$ and thus  $\po^{(n)}$ splits into the direct sum of the continuous 
 sub-bundles $\s ^{(n)} \oplus \n ^{(n)}$.

Our goal is to find a section $H^{(n)}$ of $\po^{(n)}$ so that the right 
side of \eqref{Pn} is a section of $\s^{(n)}$, and hence so is $P^{(n)}$ when defined by this equation. The sub-bundle $\n ^{(n)}$ in general is only continuous. To construct $H^{(n)}_{x}$ which depends smoothly on $x$ 
along the leaves of $W$ we will work with the factor bundle 
$\po^{(n)} / \s^{(n)}$ rather than with the splitting $\s ^{(n)} \oplus \n ^{(n)}$.
The fiber of $\po^{(n)} / \s^{(n)}$ at $x$ is the vector space 
$\po^{(n)}_x / \s^{(n)}_x$ and, as a continuous vector bundle, 
$\po^{(n)} / \s^{(n)}$ is isomorphic to $\n^{(n)}$ via the natural identification.
A local trivialization of $\po^{(n)} / \s^{(n)}$ can be obtained by fixing locally 
a constant transversal to $\s ^{(n)}$ in any trivialization of $\po^{(n)}$.
We will show in Lemma \ref{smooth s} below that the subspace 
$\s ^{(n)}_x$ depends smoothly on $x$ along the leaves of $W$,
and hence the bundle $\po^{(n)} / \s^{(n)}$ and the projection to it
from $\po^{(n)}$ are smooth along the leaves of $W$.

Projecting  \eqref{Pn} to the factor bundle $\po^{(n)} / \s^{(n)}$, our goal is to solve the  equation
 \begin{equation}\label{barHn}
0 =  - \bar H^{(n)}_{x}  + F_{x}^{-1} \circ \bar H^{(n)}_{fx} \circ F_{x} + \bar Q_{x}, \;
\end{equation}
where $\bar H^{(n)}$ and $ \bar Q$ are the projections of $H^{(n)}$ and $Q$
respectively.

We consider the bundle automorphism $\Phi : \po^{(n)} \to \po^{(n)}$ covering 
$f^{-1}: \M \to \M$ given by
the maps $\Phi_x : \po^{(n)} _{fx} \to \po^{(n)} _{x}$
 \begin{equation}\label{Phi}
\Phi_x (R)= \fd _{x}^{-1} \circ R  \circ \fd _{x}.
 \end{equation}
Since $\fd$ preserves the splitting $\E=\E^1\oplus \dots \oplus \E^l$, it follows from the definition that the sub-bundles $\s^{(n)}$ and $\n^{(n)}$ are $\Phi $-invariant. 
We denote by $\bar \Phi$ the induced automorphism of $\po^{(n)} / \s^{(n)}$
and conclude that \eqref{barHn} is equivalent to 
 \begin{equation}\label{fixedP}
 \bar H^{(n)}_{x} =  \tilde \Phi_x ( \bar H^{(n)}_{fx} ), \quad \text{where }\; \tilde \Phi_x (R)= \bar \Phi_x (R) +  \bar Q_{x}.
\end{equation}
%Thus $ \bar H^{(n)}$ is an invariant section for $\tilde \Phi$, or equivalently a fixed point for  the ``graph transform"  map $T$ (over $f^{-1}$) on the space of continuous sections of $\po^{(n)} / \s^{(n)}$:$$T[R]_{x}=\tilde  \Phi_x (R_{fx})= F_{x}^{-1} \circ R_{fx} \circ F_{x} + \bar Q _{x}.$$
Thus a solution of \eqref{barHn} is a $\tilde \Phi$-invariant section of $\po^{(n)} / \s^{(n)}$. Lemma \ref{contraction} below shows that $\tilde \Phi$ is a contraction and hence has a unique continuous invariant section, which can be explicitly written as
%Then $T$ is also a contraction and thus has a unique fixed point 
\begin{equation}\label{bar s"}
\bar H _x^{(n)}  = \sum _{k=0}^\infty (F^k_{x})^{-1} \circ \bar Q_{f^k x} \circ  F^k_{x},
\quad \text{where }\,F^k_{x}= F_{f^{k-1}x}\circ \dots \circ F_{fx} \circ F_{x}.
\end{equation}

To show that the maps $\bar H_x^{(n)}$ depend smoothy on $x$ along the 
leaves of $W$ we use the version of the $C^r$ Section Theorem of Hirsch, 
Pugh, and  Shub formulated below. We apply the theorem with 
$\B =\po^{(n)} / \s^{(n)}$ and  $\Psi = \tilde \Phi$. Since $f^{-1}$ expands $W$, 
we have   $\a_x>1$, which  yields that $\bar H _x$ is $C^r$  along the leaves 
of $W$ for any $r$. We conclude that \eqref{bar s"} gives the unique solution 
of \eqref{barHn} which is smooth along the leaves of $W$. Now we take a lift 
$\hd ^{(n)}$ of $\bar \hd ^{(n)}$ to $\po^{(n)}$, which is not unique. We take a 
continuous sub-bundle  $\tilde \n^{(n)}$ which is smooth along the leaves of
 $W$ so that subspace $\tilde \n_x^{(n)}$ is sufficiently close to $\n_x^{(n)}$ 
 and hence is transverse to $\s_x^{(n)}$ for each $x \in \M$.  Then we define 
 $\hd _x^{(n)}$ as the unique element  in $\tilde \n_x^{(n)}$ that projects to 
 $\bar \hd _x^{(n)}$. This lift $\hd ^{(n)}$ is smooth along the leaves of $W$
 and it is the desired section of $\po^{(n)}$ so that the right side of \eqref{Pn} 
 is section of $\s^{(n)}$. Finally, we define $P^{(n)}_{x}$  from \eqref{Pn} and 
 note that it is in $\s_{x,fx}^{(n)}$ and smooth along  the leaves of $W$ since 
 $\hd _x^{(n)}$ is. 

\vskip.2cm

\noindent
{\bf $C^r$ Section Theorem \cite{HPS}.}  {\it  
 Let $f$ be a $C^r$, $r\ge 1$, 
diffeomorphism of a compact smooth manifold $\M$. Let $W$ be an 
$f$-invariant topological foliation with uniformly $C^r$ leaves. 
Let $\B$ be a normed vector bundle over $\M$ and $\Psi :\B \to \B$ be an
extension of $f$ such that both $\B$ and $\Psi $ are uniformly $C^r$ along 
the leaves of $W$.

Suppose that $\Psi $ contracts fibers of $\B$, i.e. for any $x \in \M$ and any $u,w \in \B(x)$ 
\begin{equation} \label{k_x}
   \|\Psi (u)-\Psi (w)\|_{fx} \le k_x \|u-w\|_x \;\text{ with }\, \sup\{\, k_x:\, x \in \M\,\}  <1.
\end{equation}
Then there  exists a unique continuous $\Psi$-invariant  section of $\B$.
Moreover, if 
\begin{equation}
   \sup\{ \,k_x \a_x ^r:\, x \in \M\,\}  <1, \;\text{ where }\,
    \a_x = \|(Df|_{T_xW})^{-1}\|, \label{C^r eq}
\end{equation}
then the unique invariant section is uniformly $C^r$ smooth along the leaves of $W$. 
}
\vskip.2cm

This version of the $C^r$ Section Theorem (see \cite[Theorem 3.7]{KS07}) summarizes for our context Theorems 3.1, 3.2, and  3.5,  and Remarks 1 and 2 after Theorem 3.8 in  \cite{HPS}. In this theorem the smoothness of the invariant section is obtained only along $W$, so the smoothness of  $\B$ and $\Psi $ is required along $W$ only. It also follows from the proof in \cite{HPS} that the contraction in the base 
\eqref{C^r eq} needs to be estimated only along $W$ (formally, this can be
obtained by applying the theorem with the base manifold $\M$ considered 
as the disjoint union of the leaves of $W$). This has been observed in the study of partially hyperbolic systems, see for example Introduction, Theorem 3.1 and remarks
in \cite{PSW}.

\begin{lemma} \label{contraction}
The map $\Phi : \n^{(n)} \to \n^{(n)}$ given by \eqref{Phi} is a contraction
in the sense of \eqref{k_x}, and hence
so is $\,\tilde \Phi : \po^{(n)} / \s^{(n)} \to \po^{(n)} / \s^{(n)}$  given by \eqref{fixedP}.
\end{lemma}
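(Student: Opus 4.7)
The plan is to use the adapted metric from \eqref{nbrate} together with homogeneity to obtain a pointwise contraction of $\Phi$ on $\n^{(n)}$, and then transfer it to $\tilde\Phi$ via the natural bundle isomorphism $\n^{(n)}\cong\po^{(n)}/\s^{(n)}$. First I would fix $\e>0$ (to be chosen later) and work with the corresponding $\e$-adapted norms on each $\E^i$, so that \eqref{nbrate} yields $\|\fd_x t\|_{fx,\e}\le e^{\xs_j+\e}\|t\|_{x,\e}$ for $t\in\E^j_x$ and $\|\fd_x^{-1} u\|_{x,\e}\le e^{-\xf_i+\e}\|u\|_{fx,\e}$ for $u\in\E^i_{fx}$. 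I would then decompose $\po^{(n)}_x$ as the direct sum, over target components $i$ and homogeneous types $s=(s_1,\dots,s_l)$ with $s_1+\dots+s_l=n$, of the spaces of homogeneous polynomials of type $s$ with values in $\E^i_x$. Since $\fd_x$ respects the splitting \eqref{splitting}, each such summand is $\Phi$-invariant, and $\n^{(n)}$ is precisely the sum of those summands for which $\xf_i>\sum_j s_j\xs_j$.

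On a summand of type $(s,i)$, homogeneity yields $\|R(v)\|\le\|R\|\prod_j\|v_j\|^{s_j}$ for $v=v_1+\dots+v_l$, so composing with $\fd_x$ on the right and $\fd_x^{-1}$ on the left gives
\[
\|\Phi_x(R)\|\;\le\;e^{-\xf_i+\sum_j s_j\xs_j+(1+n)\e}\,\|R\|.
\]
By \eqref{degree} every relevant polynomial has degree at most $d$, so for $n\le d$ only finitely many types $(s,i)$ occur in $\n^{(n)}$. On each such non-sub-resonance type $\xf_i-\sum_j s_j\xs_j>0$ strictly, and taking the minimum over this finite set produces a uniform gap $\delta>0$. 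Choosing $\e$ with $(1+d)\e<\delta/2$ then gives the uniform bound $\|\Phi_x|_{\n^{(n)}_{fx}}\|\le e^{-\delta/2}<1$, which is exactly \eqref{k_x} for $\Phi$ (as $\Phi_x$ is linear).

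To pass to $\tilde\Phi$, I would use that the splitting $\po^{(n)}=\s^{(n)}\oplus\n^{(n)}$ is $\Phi$-invariant, since $\fd$ preserves \eqref{splitting} and sub-resonance is defined purely in terms of homogeneous types. Hence the projection $\n^{(n)}\to\po^{(n)}/\s^{(n)}$ is a continuous bundle isomorphism conjugating $\Phi|_{\n^{(n)}}$ with $\bar\Phi$; endowing $\po^{(n)}/\s^{(n)}$ with the norm transported by this isomorphism (or, equivalently, noting that on the compact base $\M$ the quotient norm is uniformly equivalent to it) transfers the contraction estimate to $\bar\Phi$. Finally, $\tilde\Phi_x(R)=\bar\Phi_x(R)+\bar Q_x$ is an affine map whose pointwise Lipschitz constant equals that of its linear part $\bar\Phi_x$, so \eqref{k_x} for $\tilde\Phi$ follows immediately.

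The main technical point is the homogeneity-based estimate on each $(s,i)$ summand combined with \eqref{degree} to produce a uniform spectral gap $\delta$; once this is established, the transfer from $\Phi$ to $\tilde\Phi$ is a purely formal consequence of the $\Phi$-invariance of $\s^{(n)}\oplus\n^{(n)}$ and the affine form of $\tilde\Phi$.
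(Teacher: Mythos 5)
Your proof is correct and follows essentially the same route as the paper: the adapted norms from \eqref{nbrate}, the decomposition of $\n^{(n)}$ into $\Phi$-invariant summands indexed by target component and homogeneous type, the estimate $\|\Phi_x(R)\|\le e^{-\xf_i+\sum_j s_j\xs_j+(n+1)\e}\|R\|$ with $\e$ chosen small relative to the (finite, hence uniformly positive) non-sub-resonance gap, and the transfer to $\tilde\Phi$ via the continuous identification $\n^{(n)}\cong\po^{(n)}/\s^{(n)}$ together with the observation that the affine map $\tilde\Phi_x$ has the same Lipschitz constant as its linear part. The only cosmetic slip is invoking \eqref{degree} to bound the number of types --- for each fixed $n$ there are finitely many types $s$ with $|s|=n$ regardless of $d$ --- but this does not affect the argument.
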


\begin{proof}
We recall that the norm of a homogeneous polynomial map $R: E \to  \bar E$ of 
degree $n$ is defined as $\|R\|=\sup\{\,\|R(v)\|:\; v\in E, \;  \|v\|=1 \,\}$. If $P$ 
is a linear or a homogeneous polynomial map $\tilde E \to E$, then for the 
norm of the composition we have
 \begin{equation}\label{normP}
 \|\, R \circ P \,\|\leq \|R\|\cdot \|P\|^n.
\end{equation}
Suppose that $E=E^1\oplus ... \oplus E^l$ and $\|v\|= \max_i \|v_i\|$ for 
$v=(v_1, ..., v_l)$. If $R$ is of homogeneous type  $s=(s_1,  ..., s_l)$ then 
by \eqref{stype} we get $\|R(v)\| \le \|R\|\, \|v_1\|^{s_1} ... \|v_l\|^{s_l}$.

We will use the norm on $\E$ which is the maximum of the norms on 
$\E^i$ used in \eqref{nbrate}:  
$\|t\|=\|t\|_{x,\e}= \max_i \|t_i\|_{x,\e}$ for any $t=(t_1,..., t_l)\in \E_x$.

Then for a polynomial $R: \E_{f x} \to \E_{f x}^i $ of homogeneous type  
$s=(s_1,  ..., s_l)$ we have 
\begin{equation}\label{normS}
 \|\, R \circ F_x \,\|\leq \|R\| \cdot \| F|_{\E^1_x}\|^{s_1} ... \| F|_{\E^l_x}\|^{s_l}
\end{equation}
 If $R \in \n_{f x}$ then, by definition, we have 
$\xf _i > \sum s_j \xs _j$. We can choose a sufficiently small $\e>0$
so that $  \xf _i  > \sum s_j \xs_j + (n+2)\e$  for all such relations.
It follows from  \eqref{nbrate} that $\| F|_{\E^j_x}\|\le e^{\xs_j +\e}$ 
and $\|F|_{\E^i_x}^{-1}\|\le e^{-\xf_i +\e}$, so we conclude that for each $x \in \M$
$$
\begin{aligned}
\| \Phi_x (R) \| &\le \|F|_{\E^i_x}^{-1}\| \cdot \| R \|  \cdot \prod_j \| F|_{\E^j_x}\|^{s_j} 
\le e^{-\xf_i +\e} \cdot \| R \|  \cdot \prod_j (e^{\xs_j +\e})^{s_j} \le \\
& \le e^{-\xf_i + \sum s_j \xs_j +(n+1)\e} \cdot   \| R \|  
\le   e^{-\e}    \| R \| .
\end{aligned}
$$ 
Thus $\Phi$ is a contraction on $\n^{(n)}$. The second statement follows since the 
linear part $\bar \Phi$ of $\tilde \Phi$ is a contraction. Indeed, $\bar \Phi$
corresponds to $\Phi$ under the identification of $\po^{(n)} / \s^{(n)}$  and 
$\n^{(n)}$ as continuous vector bundles given by the natural linear isomorphisms between $\po^{(n)}_x / \s^{(n)}_x$ and $\n^{(n)}_x$, which depend continuously on the base point. Hence $\|\bar \Phi _x \| \le   e^{-\e}$
with respect to a continuous family of norms on the fibers of 
$\po^{(n)} / \s^{(n)}$.

\end{proof}

%?Remark: Even though $\Phi$ contracts $\n$, if $\xf \ne \xs$ we can not conclude that the splitting $\po=\s \oplus \n$ has ``spectral gap" as it is possible that some directions in $\s$ may be contracted by $\Phi$. Hence we can not use $C^r$ Section Theorem to obtain smoothness of $\s$. 
%To prove smoothness of $\s_x^{(n)}$ along $W$, we obtain the following useful result.

\begin{proposition} \label{sr-flag}
The vector space of sub-resonance polynomials $\s(E)=\s^{(\xf,\xs)} (E,E)$ 
for the given $(\xf,\xs)$ and splitting $E= E^1 \oplus \dots \oplus E^l$ 
depends only on the fast flag $V$:
$$
  E^1=V^1 \subset V^2 \subset ... \subset V^l =E, \quad \text{where }\;
  V^i= E^1 \oplus \dots \oplus E^i.
$$
More precisely, $\s(E)=\s(\tilde E)$ if $\tilde E$ is $E$ equipped with
 a splitting that generates the same fast flag: 
 $V^i= \tilde E^1 \oplus \dots \oplus \tilde E^i$, for $i=1,... ,l$.
\end{proposition}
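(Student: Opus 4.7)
The plan is to prove the two inclusions $\s(E) \subseteq \s(\tilde E)$ and $\s(\tilde E) \subseteq \s(E)$; by symmetry it suffices to do one. So I would take an arbitrary $P \in \s(E)$ for the original splitting and verify directly that $P$ is still sub-resonance with respect to the new splitting. The key structural feature is that, because both splittings give the same fast flag $V^1 \subset \dots \subset V^l$, the change of coordinates between them is upper-triangular with respect to this flag.

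First I would set up the transition. Writing $v = \sum_j v_j = \sum_j \tilde v_j$ and using that $V^{i-1}$ is common to both splittings while $E^i$ and $\tilde E^i$ are both complements of $V^{i-1}$ in $V^i$, one obtains linear maps $\tau^k_j : E^j \to \tilde E^k$ (defined for $k \le j$, with $\tau^k_k$ invertible) giving $\tilde v_k = \sum_{j \ge k} \tau^k_j(v_j)$. In particular $\tilde v_k$ depends only on $v_k, \dots, v_l$, and inversely $v_k$ depends linearly only on $\tilde v_k, \dots, \tilde v_l$. Substituting into the sub-resonance decomposition $P = \sum_i P_i$ and collecting by components in the new splitting yields $\tilde P_k(\tilde v) = \sum_{i \ge k} \tau^k_i(P_i(v))$. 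Combined with the fact that each $P_i(v)$ depends (by old-splitting sub-resonance) only on $v_i, \dots, v_l$ and is linear in $v_i$, the upper-triangular substitution immediately shows that $\tilde P_k$ depends only on $\tilde v_k, \dots, \tilde v_l$ and is linear in $\tilde v_k$, with the $\tilde v_k$-linear contribution coming solely from the $v_k$-linear term of $P_k$.

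The central step is to check that the sub-resonance inequalities transfer. A monomial from $P_i$ of multi-degree $\tilde s = (0, \dots, 0, \tilde s_i, \dots, \tilde s_l)$ satisfying $\xf_i \le \sum_{j \ge i} \tilde s_j \xs_j$ expands, after the substitution $v_j = \sum_{m \ge j} \tilde\tau^j_m(\tilde v_m)$, into monomials of multi-degree $s$ in $\tilde v$-variables where every old factor $v_j$ has been replaced by a $\tilde v_m$ with $m \ge j$. The monotonicity $\xs_j \le \xs_m$ for $j \le m$, combined with $\xf_k \le \xs_k < \xf_{k+1} \le \xf_i$ for $k < i$, yields $\sum_m s_m \xs_m \ge \sum_j \tilde s_j \xs_j \ge \xf_i \ge \xf_k$, so every monomial of $\tilde P_k$ satisfies $\xf_k \le \sum_m s_m \xs_m$, which is exactly the sub-resonance condition for the new splitting.

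I expect this combinatorial/monotonicity step to be the main obstacle: one must carefully track how the mixing substitution redistributes multi-indices among the $\tilde v_m$'s, and confirm that no spurious contributions emerge at indices below $k$ or as nonlinear terms in $\tilde v_k$. The underlying reason the proposition holds is precisely that the upper-triangular substitution only shifts multi-indices upward, where the $\xs_j$'s are larger and the sub-resonance inequality is easier to satisfy; this monotonicity is the algebraic content of why the flag alone, and not the choice of splitting refining it, determines $\s(E)$.
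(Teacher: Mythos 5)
Your proof is correct and takes essentially the same route as the paper's: the paper conjugates by a block-triangular isomorphism $A$ carrying $E^i$ onto $\tilde E^i$ and checks that composing a sub-resonance polynomial with such a map only produces terms of ``subordinate'' combinatorial type, which is precisely your observation that the triangular change of splitting shifts input multi-indices upward (where $\mu_m$ is larger) and output components downward (where $\lambda_k$ is smaller). Your monotonicity chain $\sum_m s_m\mu_m \ge \sum_j \tilde s_j\mu_j \ge \lambda_i \ge \lambda_k$ is exactly the inequality behind the paper's subordinate-relation argument.
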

\begin{proof} 
Let $A: E \to \tilde E$ be a linear isomorphism with $A(E^i)= \tilde E^i$, 
for $i=1,... ,l$. Then $R \in \s (E)$ if and only if 
$\tilde R = A \circ R \circ A^{-1} \in \s (\tilde E)$. Also, $A$ and $A^{-1}$ are
{\em block triangular} for the splitting $E=E^1 \oplus \dots \oplus E^l$, that is
their matrices are block triangular in any basis adapted to this splitting or, equivalently, they belong to $\s(E)$. To complete the proof we need to show 
that if $R \in \s (E)$ then so are $A \circ R$ and $R \circ A^{-1}$. 
 
 By splitting $R$ into components and homogeneity types,
it suffices to check this for $R : E \to E^i$ corresponding to a 
sub-resonance relation $\xf _i \le \sum s_j \xs _j$, whose combinatorial  type 
we dente by $(i; s_1,  ..., s_l)$. We call a relation $(m; s_1',  ..., s_l')$
with $|s'|=|s|$ {\em subordinate} to $(i; s_1,  ..., s_l)$ if $m\le i$ and 
 $\sum _{j \le k} s_j' \le \sum _{j \le k} s_j$ for each $k$ 
 (this gives a partial order on relations). It follows that $\xf _m \le \sum s_j' \xs _j$ 
 so that all subordinate relations are also of sub-resonance type. 
 It is easy to see that the homogeneity type of any term in $A \circ R$ or
  $R \circ A^{-1}$ corresponds to a relation subordinate to $(i; s_1,  ..., s_l)$,
 and hence these polynomials are in $\s (E)$.
 
In fact, the subspace spanned by all sub-resonance polynomials corresponding 
to the relations subordinate to $(i; s_1,  ..., s_l)$ can be explicitly described in terms of the flag $V$ only: it consists of all polynomial maps 
$R : \E_x \to V^i_x$ such that for each $k$ the degree of $R$ along the 
subspace $V^k_x$ is at most $\sum _{j \le k} s_j$. This is an alternative way to
see that $\s(E)$ depends only on the fast flag.
\end{proof}

\begin{lemma} \label{smooth s}
The fibers $\s_x ^{(n)}$ of the vector sub-bundle $\s^{(n)} \subset \po^{(n)}$ depend smoothly on $x$ along the leaves of $W$.
\end{lemma}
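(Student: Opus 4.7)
By Proposition \ref{sr-flag} applied fiberwise, the subspace $\s_x^{(n)} \subset \po_x^{(n)}$ depends only on the fast flag $V^1_x \subset \dots \subset V^l_x = \E_x$, where $V^i_x = \E^1_x \oplus \dots \oplus \E^i_x$. Once each sub-bundle $V^i$ is smooth along the leaves of $W$, one can choose a frame of $\E$ smooth along $W$ and adapted to the fast flag; in such a frame $\s_x^{(n)}$ is cut out by linear conditions whose coefficients do not depend on $x$, and the smoothness of $\s^{(n)}$ along $W$ follows. So it suffices to prove that each $V^i$ is smooth along $W$.

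My plan is to realize $V^i$ as the unique attracting invariant section of the Grassmann bundle $\mathrm{Gr}_{d_i}(\E)$, where $d_i = \dim V^i$, under the map induced by $F^{-1}$, and to apply the $C^r$ Section Theorem in a linearized graph chart. The setup requires a reference splitting $\E = \tilde V^i \oplus \tilde V^{i,c}$ whose summands are continuous on $\M$, smooth along $W$, and $C^0$-close to the invariant splitting $V^i \oplus V^{i,c}$, where $V^{i,c} = \E^{i+1} \oplus \dots \oplus \E^l$. Such a reference splitting can be produced by fixing a Riemannian metric on $\E$ smooth along $W$, approximating the continuous orthogonal projection onto $V^i$ by a section of $\mathrm{End}(\E)$ smooth along $W$ using a partition of unity subordinate to a cover of $\M$ by foliation charts, and then projecting the result back to the variety of true rank-$d_i$ projections by holomorphic functional calculus on a small contour enclosing $1$.

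In the chart associated to this splitting, sub-bundles of $\E$ transverse to $\tilde V^{i,c}$ are parametrized by sections of the smooth-along-$W$ vector bundle $\B = \mathrm{Hom}(\tilde V^i, \tilde V^{i,c})$, and $V^i$ corresponds to a continuous section $L^0$ with small $C^0$ norm. Writing $F$ in blocks relative to $\tilde V^i \oplus \tilde V^{i,c}$, the off-diagonal blocks are small and the diagonal blocks $A, D$ are close to $F|_{V^i}$ and $F|_{V^{i,c}}$. The graph transform of $F^{-1}$ defines an extension $\Psi : \B \to \B$ covering $f^{-1}$, fixing $L^0$, with linearization at $L^0$ a small perturbation of $L \mapsto D^{-1} L A$. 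Using the metrics \eqref{nbrate} with $\e$ small and the spectral gap $\xs_i < \xf_{i+1}$, this linearization has fiberwise norm at most $e^{\xs_i - \xf_{i+1} + 3\e} < 1$, so after cut-off extension to all of $\B$, the map $\Psi$ becomes a uniform fiber contraction in the sense of \eqref{k_x} with some rate $k < 1$.

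We now apply the $C^r$ Section Theorem with base diffeomorphism $f^{-1}$ and the $f^{-1}$-invariant foliation $W$: the relevant base constant is $\alpha_x = \|(Df^{-1}|_{T_xW})^{-1}\| = \|Df|_{T_{f^{-1}x}W}\| < 1$ since $f$ contracts $W$, and hence $k_x \alpha_x^r < k < 1$ for every $r \ge 1$. The unique continuous $\Psi$-invariant section $L^0$ is therefore smooth along $W$ of all orders, which yields smoothness of $V^i$ along $W$ and, via the first paragraph, of $\s_x^{(n)}$ as well. The main obstacle is the construction of $\tilde V^i$: without a reference sub-bundle that is simultaneously continuous on $\M$ and smooth along $W$, the ambient Grassmann chart itself fails to be smooth along $W$, so the hypotheses of the $C^r$ Section Theorem cannot be met and the whole strategy breaks down.
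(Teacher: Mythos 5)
Your proof is correct and follows essentially the same route as the paper: reduce via Proposition \ref{sr-flag} to smoothness of the fast flag along $W$, then replace the continuous splitting by one that is smooth along the leaves and generates the same flag. The only difference is that you write out the graph-transform/$C^r$ Section Theorem argument for smoothness of the fast flag (including the construction of the auxiliary splitting that is continuous on $\M$ and smooth along $W$), a step the paper handles by citing \cite[Proposition 3.9]{KS07} and by noting it also follows from the normal forms themselves; your version of that argument is sound.
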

\begin{proof} We consider the fast flag $\V_x$ in  $\E_x$:
\begin{equation}\label{fastflag}
\E_x^1=\V_x^1 \subset \V_x^2 \subset ... \subset \V_x^l =\E_x, \quad \text{where }\; \V^i_x= \E^1_x \oplus \dots \oplus \E^i_x
\end{equation} 
First we note that the subspaces $\V^i_x$ depend smoothly on $x$ along $W$. This is relatively well-known and is proved by an application of the $C^r$ 
Section Theorem to a suitable graph transform extension, see 
\cite[Proposition 3.9]{KS07} for  an explicit reference. In fact, in our setting,
smoothness of the fast flag along $W$ follows directly from
normal forms since $\h_x$ maps the fast sub-foliations of $W$ to 
linear foliations  of $\E_x$,  see Section \ref{foliations} for details 
(smooth dependence of $\h_x$ on $x$ is not needed for this argument, 
so there is no circular reasoning).

It now follows that there exist a splitting  $ \E_x = \tilde  \E^1_x \oplus \dots \oplus \tilde \E^l_x$ which is smooth in $x$ along $W$ and defines the same flag $\V_x$. 
Then Proposition \ref{sr-flag} implies that $ \s_x ^{(n)}=\tilde \s_x ^{(n)}$, 
the space of all sub-resonance homogeneous polynomials of degree $n$ 
for the same $(\xf, \xs)$ and the new splitting. The latter  clearly depends 
smoothly on $x$ along $W$. 
 \end{proof}

%%%%%%%%%%%%%%%%%%%%%%
%%%%%%%%%%%%%%%%%%%%%%%%%%%
Thus we have constructed the formal series 
$\h_x(t)= \sum_{n=1}^\infty \hd^{(n)}_x(t)$
for the coordinate change and the polynomial map $\p_x(t)= \sum_{n=1}^d \pd^{(n)}_x(t)$, where $d=\lfloor \xf_1/\xs_l \rfloor$. Now we obtain the actual coordinate change function. We fix $N > d$ and conjugate $\f$ by the polynomial coordinate change $\bar \h ^N$
given by
\begin{equation}\label{barH}
 \bar \h^N_x(t)= \sum_{n=1}^N \hd^{(n)}_x(t)\quad\text{and denote}\quad
  \tilde \f_x(t)=\bar \h_{fx}^N \circ \f_{x} \circ (\bar \h_x^N)^{-1}.
\end{equation}
  We note that $\bar \h^N_x(t)$ is a diffeomorphism on a neighborhood 
  $\tilde U_x \subset U_x$ of $0\in\E_x$ since its differential at $0$ 
  is $\Id$, moreover the size of 
  $\tilde U_x$ can be bounded away from $0$ by compactness of $\M$.
  By the construction of $\bar \h^N$, the maps $\tilde \f_x$ and $\p_x $ have the same derivatives at $t=0$ 
  up to order $N$ for each $x \in \M$. 
  We now look for the coordinate change conjugating 
  $\tilde \f_x$ and $\p_x$ in the space 
  $$
  \c_x = \{ R_x \in C^N(\tilde U_x,\E_x) : \;R_x(0)=0, \;D_0 R_x=\Id, \;
  D^{(k)}_0 R_x=0, \; k=2,...,N \}.
  $$
It is a closed affine subspace of $C^N(\tilde U_x,\E_x)$ with a standard norm
  $$
  \|R \|_N=\max \{\,\| D^{(k)}_t R \|: \; t \in U_x, \; 0\le k \le N\}.
  $$
 Each element in $\c_x$ is a diffeomorphism on a neighborhood of $0\in\E_x$.  We will fix $\delta<1$ and then choose $\tilde U_x$ to be convex and small enough so that for any $R_1, R_2$ in $\c_x$ and $R = R_1 -R_2$ we have
\begin{equation}\label{deriv}
 \|D^{(k)} R \|_{0}\le \delta \, \|D^{(N)} R \|_{0} \quad\text{for } 0 \le k<N
 \quad \text{and hence} \quad \|R \|_{N} = \|R^{(N)} \|_{0}.
\end{equation} 
This is possible since $D^{(k)}_0 R=0$  for $0 \le k\le N$ and hence for any $t \in \tilde U_x$ and $0 \le k < N$ we can estimate
$$
\| D^{(k)}_t R \| \le \| t\| \cdot \sup \{ \| D^{(k+1)}_s R \| : \|s\| \le \|t\|\}
\le \diam \, \tilde U_x \cdot \|R^{(k+1)} \|_{0}.
$$
 We consider the space $\S$ of continuous sections of the bundle $\c$
 and equip it with the distance  $\dist(R_1,R_2)=\| R_1 - R_2  \|_N= 
 \sup_x \| (R_1 - R_2)_x \|_N $. We consider the operator
  $$
   T[R]_x= (\p_x)^{-1} \circ R_{fx} \circ \tilde \f_{x}
  $$
on $\S$  so that the fixed point of $T$ is the desired coordinate change.
Note that $T[R]$ is in $\S$ by the definition of $\c_x$ and the coincidence of the 
derivatives of $\p$ and $\tilde \f$ at 0.

  To show that $T$ is a contraction on $\S$ we denote $R = R_1 -R_2$
  for $R_1, R_2$ in $\S$ and estimate  
  $\| T[R] \|_N = \sup_x \| T[R]_x \|_N $, which by \eqref{deriv} equals
 to  $ \sup_x \| T[R]_x^{(N)} \|_0$.
\begin{equation} \label{est}
\begin{aligned}
   D_t^{(N)} T[R]_x & = D_t^{(N)}  \left( (\p_x)^{-1} \circ R_{fx} \circ \tilde \f_{x} \right)= \\
  & = D_{R_{fx}(\tilde \f_{x}(t))}^{(1)} \, (\p_x)^{-1} \circ D_{\tilde \f_{x}(t)}^{(N)} R_{fx} \circ D_t^{(1)}\tilde \f_{x} \,+ \,J,
  \end{aligned}
\end{equation}
where $J$ consists of  a fixed number of terms of the type
$$
   D_{R_{fx}(\tilde \f_{x}(t))}^{(i)} \, (\p_x)^{-1} \circ D_{\tilde \f_{x}(t)}^{(j)} R_{fx} \circ D_t^{(k)}\tilde \f_{x}, \quad ijk=N, \; j<N,
  $$
  whose norm can be estimated using \eqref{normP} as
$$
  \| D_{R_{fx}(\tilde \f_{x}(t))}^{(i)} \, (\p_x)^{-1} \| \cdot \| D_{\tilde \f_{x}(t)}^{(j)} R_{fx} \|^i\cdot \| D_t^{(k)}\tilde \f_{x}\|^{ij}.
  $$  
By \eqref{deriv},  $ \| D_{\tilde \f_{x}(t)}^{(j)} R_{fx}\| \le \delta \, \| D^{(N)} R_{fx} \|_{0}<1$ if $\delta$ is small enough. Therefore, there exists a constant $M=M(\f, \p, N)$ such that
\begin{equation}\label{J}
\| J\| \le M \cdot  \delta \| D^{(N)} R_{fx} \|_{0} 
\le M \delta \cdot \|  R \|_{N}.
\end{equation}

We estimate  the main term in \eqref{est} as follows
\begin{equation} \label{main term}
\begin{aligned}
&\| D_{R_{fx} (\tilde \f_{x}(t))}^{(1)} \, (\p_x)^{-1} \circ D_{\tilde \f_{x}(t)}^{(N)} R_{fx} \circ D_t^{(1)}\tilde \f_{x} \| \,\le \\
&\le \,
  \| D_{R_{fx}(\tilde \f_{x}(t))}^{(1)} (\p_x)^{-1} \| \cdot \| R \|_N \cdot \|  D_t^{(1)}\tilde \f_{x} \|  ^N \,\le \\
 & \le \, e^ {-\lambda_1+\e} \cdot \| R \|_N \cdot e^ {N\mu_l +N\e} \,= \,k'\cdot \| R \|_N,
 \end{aligned}
\end{equation}
where $k'=\exp((N\mu_l - \lambda_1) +(N+1)\e)$.  Since $\lambda_1> N \mu_l$ by the 
choice of $N$ we can take $\e$ small enough so that $k'<1$.
Combining the estimates \eqref{J} and \eqref{main term} we get
$$
\| T[R_1-R_2] \|_N = \sup_x \| T[R_1-R_2]_x^{(N)} \|_0 \le 
(k'+M \delta) \cdot \| R_1-R_2 \|_{N}
$$
so that $T$ is a contraction if $\delta$ is small enough. Thus $T$ has
a unique fixed point, which is a continuous family $\tilde \h^N_x$ of 
coordinate changes conjugating $\p_x$ and $\tilde \f_x$ given by \eqref{barH}. 

We conclude that the maps $\h^N_x=\tilde \h^N_x \circ \bar \h^N_x$ give
a family of $C^N$ diffeomorphisms defined on a neighborhood of the 
zero section of $\E$ which depend continuously on $x\in \M$ in $C^N$ 
topology and satisfy (i) and (ii) of Theorem \ref{Main}. Since $f$ contracts 
$W$,  this family extends uniquely to a family of $C^N$ global 
diffeomorphisms $\h^N_x : W_x \to \E_x$ that satisfy (i). 
Once we fix a formal power series for $\h$, the construction works 
for each $N>d$ and $\h^N$ is unique among $C^N$ diffeomorphisms
whose derivatives up to order $N$ are given by the series.
This means that all $\h^N$ coincide and give a family of $\Ci$ 
diffeomorphisms.

Thus we have proved parts (i),(ii) and (iii) with smoothness along $W$
established so far for the Taylor coefficients of $\h _x$ at $0$. The 
smoothness of $\h _x$ in $x$ along $W$ will follow from this once we 
establish part (iv). Indeed, in the coordinates on $W_x$
given by $\h _x$ for a fixed $x$, for all other $y \in W_x$ the maps $\h_y$ 
are polynomials and thus coincide with their finite Taylor series.

%%%%%%%%%%%%%%%%%%%%%%%%%%%%%%%%%%

\subsection{Consistency of the fast foliations} \label{foliations}
First we describe some properties of the fast flag \eqref{fastflag} and 
related properties of $\h _x$ and of sub-resonance generated maps.
The leaves of $W$ are subfoliated by unique foliations $U^{k}$ tangent to $V^k_x= \E_x^{1} \oplus \dots \oplus \E_x^{k}.$
We denote by $\bw^{k}$ the corresponding foliations of $\E_x$ obtained by
the identification $\h _x :W_x \to \E_x$. Thus we obtain the foliations of 
$\bw^{k}$ of $\E$ which are invariant under the polynomial normal form
maps $\p_x$. Since the maps $\h _x$ are diffeomorphisms, $\bw^{k}$ are 
also unique fast foliations with the same contraction rates. They are 
characterized by
$$\text{for } \;y, z \in  \E_x  \qquad z \in \bw^{k} (y) \; \Leftrightarrow \; \dist (\p^n _x(y),\p^n_x (z)) \le C e^{n (\xs _k +\e)} \text{ for all } n \in \N$$ 
for any $\e$ sufficiently small so that $\xs _k +\e< \xf_{k+1}$.

%Similarly to the case of linear maps in Proposition \ref{sr-flag}, 
It follows from  Definition \ref{SRdef} that sub-resonance polynomials 
$R \in \s_{x,y}$ are {\em block triangular} in the sense that $\E^i$ component 
does not depend on $\E^j$ components for $j<i$ or, equivalently, it maps
the subspaces $\V^i_x$ of fast flag in $\E_x$ to those in $\E_y$. 
By considering compositions, we obtain that any sub-resonance generated polynomial in $G _{x,y}$ is also block triangular.
%This follows either by considering compositions or from the fact that $G_{(\xf,\xs)} \subset G_{(\xf',\xf')}$, which consists of sub-resonance maps ? What does it mean for $\s(E, \bar E)$ setting ? 

It is easy to see that all derivatives of a sub-resonance  polynomial are 
sub-resonance polynomials. 
%This should follow by differentiating $\p_x$ as sub-resonance polynomials ``take slow coordinates to fast ones".
In particular, the derivative 
$D_y \p_x$ at {\em any} point $y \in \E_x$ is sub-resonance and hence is 
block triangular and thus maps subspaces parallel to $V^{k}_x$ to subspaces 
parallel to $V^{k}_{fx}$. Hence the foliation of $\E$ by those parallel to 
$V^{k}_x$ in $\E_x$ is invariant under the maps $\p_x$ and hence coincides 
with $\bw^{k}$ by uniqueness of the fast foliation. 
%(This implies that the fast flag is smooth along $W$.)
%One implication is that $U^{k}$ is a smooth subfoliation of each leaf $W_x$ since it is mapped by a diffeomorphism $H_x$ to a linear foliation of $\E_x$. 
It follows that for any $x\in \M$ and any $y\in W_x$  the diffeomorphism
  \begin{equation}\label{hdef}
\G_{x,y}:=\h_y \circ \h_x^{-1} : \E_x \to \E_y
\end{equation}
maps the fast flag of linear foliations of $\E_x$ to that of $\E_y$.
In particular, the same holds for its derivative 
$ D_0 \G_{x,y}=D_x H_y : \E_x \to \E_y$ and we conclude that 
$ D_0 \G_{x,y}$ is block triangular and thus is a sub-resonance 
linear map.

% Remark. By Proposition \ref{sr-flag} the space of sub-resonance maps we get that the groups $G_y$ and $G_x$ are naturally identified ? SR vs SRgen an issue ?

%%%%%%%%%%%%%%%%%%%%%%%%%%%%%%%%%

\subsection{Proof of (iv): Consistency of normal form coordinates}
In this section we show that the map $\G_{x,y}$ in \eqref{hdef}
is a sub-resonance generated polynomial.  First we note that
$$
\G_{x,y} (0)  =\h_y (x)  =:\bar x \in \E_y \quad\text{and}\quad
 D_0 \,\G_{x,y}=D_x \h_y.
$$ 
Since $ \h_{f^n x}^{-1}  \circ \p ^n_x  \circ \h_{x} = f^n = \h_{f^n y}^{-1}  \circ \p ^n_y  \circ \h_{y}$ we obtain that
$$  \h_{f^n y} \circ \h_{f^n x}^{-1}  \circ \p ^n_x  
= \h_{f^n y} \circ f^n \circ \h_x^{-1} = 
  \p ^n_y  \circ \h_y \circ \h_x^{-1}  \quad \text {and hence}
  $$ %  : \E_x \to \E_{f^n x}
  \begin{equation}\label{hcom}
\G_{f^n x,f^n y} \circ \p ^n_x =  \p ^n_y  \circ \G_{x,y}.
\end{equation}
 %First the derivative $ D_0 h_{x,y}=D_x H_y : \E_x \to \E_y$ preserves the fast flag and thus is a sub-resonance linear map (as in consistency of fast flags or by direct argument).
Now we consider the formal power series  for $\G_{x,y} : \E_x \to \E_y$ at $t=0 \in \E_x$: 
$$\G_{x,y}(t) \sim G_{x,y}(t)=\bar x + \sum_{m=1}^\infty G^{(m)}_{x,y}(t). 
$$
Our first goal is to show that all its terms are sub-resonance generated.
We proved in Section \ref{foliations} that the first derivative 
$ G^{(1)}_{x,y}=D_x H_y $ is a  sub-resonance linear map. 

Inductively, we assume that $ G^{(m)}_{x,y}$ has only sub-resonance generated
terms for all $x\in \M$, $y \in W_x$, and $m=1,...,k-1$ and show that the same holds for $ G^{(k)}_{x,y}$. We split $G^{(k)}_{x,y}=S_{x,y}+N_{x,y}$ into the 
sub-resonance generated part and the rest. Using invariance under contracting 
maps $\p_x$, it suffices to show that $N_{x,y}= 0$ for all $y \in W_x$ that are sufficiently close to $x$. Assuming the contrary, we fix such $x$ and $y$
with $N_{x,y}\ne 0$. We will write $N_{x}$ for $N_{x,y}$ and 
$N_{f^nx}$ for $N_{f^nx,f^ny}$. Now we consider order $k$ terms in 
the Taylor series at $0\in \E_x$ for \eqref{hcom}. Taylor series  for $\p _x^n$ at 
$0\in \E_x$ coincides with $\p _x^n(t)=\sum_{m=1}^M \pd ^{(m)}_{x}(t)$. 
We also consider the Taylor series  for $\p _y^n$ at $\G_{x,y}(0) = \bar x \in \E_y$ 
$$
\p _y^n(z)= \bar x_n + \sum_{m=1}^M Q^{(m)}_{y}(z-\bar x), \quad 
\text{where } \bar x_n =\p _y^n (\bar x) % = \p _y^n (H_y(x))= H_{f^n y}(f^n x).
$$
All terms $Q^{(m)}$ are sub-resonance as the derivatives of a sub-resonance  polynomial.
%In particular, $Q_y^{(1)}=D_{\bar x} \p _y^n$ is an ``upper-triangular" linear map, is it needed?
Consider the formal power series  for 
$$\G_{f^n x,f^n y}(t) \sim G_{f^n x,f^n y}(t) = \bar x_n  + \sum_{m=1}^\infty G^{(m)}_{f^n x,f^n y}(t).
$$
Now we obtain from \eqref{hcom}
$$\bar x_n  + \sum_{j=1}^\infty G^{(j)}_{f^n x,f^n y}\left( \sum_{m=1}^M \pd ^{(m)}_{x}(t) \right) = \,\bar x_n  + 
\sum_{m=1}^M Q^{(m)}_{y}\left( \sum_{j=1}^\infty G^{(j)}_{x,y}(t)  \right).
$$
Since any composition of sub-resonance generated terms is sub-resonance generated, taking non sub-resonance generated terms of order $k$ in the 
above equation yields
 \begin{equation}\label{NRcom}
N_{f^n x}\left(  \pd ^{(1)}_{x}(t) \right) =  Q^{(1)}_{y}\left( N_{x}(t)  \right).
\end{equation}
%where $N_{f^n x}$ and $N_{x}$ are the corresponding terms in $G^{(k)}_{f^n x,f^n y}$ and $G^{(k)}_{x,y}$. 
We decompose $N$ into components $N=(N^1,... , N^l)$ and let $i$ be the largest index so that  $N^i _x \ne 0$, i.e. there exists $t' \in \E_x$ so that $z'= N^i (t')$
 has non-zero component in $\E^i_y$. We denote
  \begin{equation}
 w= Q^{(1)}_{y}(z') =  D_{\bar x} \p _y^n (z') \in \E_{f^n y} \quad  \text{ and let }\; 
 w_i =Pr_i (w) \in \E^i_{f^n y} 
 \end{equation}
 be  its $i$ component. We claim that 
 \begin{equation}\label{right}
 \| w_i \|  \ge C e^{n (\xf _i -\e)}
 \end{equation}
where the constant $C$ does not depend on $n$. This follows from \eqref{nbrate} and the fact that 
$$D_{\bar x} \p _y^n = D_{f^n x}H_{f^n y} \circ \fd^n_x \circ (D_x H_y) ^{-1} . 
$$ 
Indeed, the differentials $D_{f^n x}H_{f^n y}$ and $D_{x}H_y ^{-1} $ preserve the fast flag and are close to identity since $x$ is close to $y$ by our assumption
and hence $f^n x$ is close to $f^n y$. Then $z=(D_x H_y) ^{-1} (z')$ has non-zero component $z_i$ in $\E^i_x$, which is transformed by $ \fd^n_x$ according to 
\eqref{nbrate}: $\|  \fd^n_x (z_i) \| \ge e^{n (\xf _i -\e)} \| z_i \|$. Then finally
 $\| w_i \|  \ge C' \|  \fd^n_x (z_i) \|$ as $D_{f^n x}H_{f^n y}$ is close to identity
 and $\fd^n_x (z)$ has no $j$ components for $j>i$.
 \vskip.1cm
 
 Now we estimate from above the $i$ component of the left side of \eqref{NRcom} at $t'$. 
 First, 
 $$\| \pd ^{(1)}_{x}(t'_j) \| = \| \fd^n_x (t'_j) \|\le  \| (t') \| e^{n  (\xs _j +\e)}
 \quad\text{for any }j .$$
Let $N^s_{f^n x}$ be a  term of homogeneity type $s=(s_1,  ..., s_l)$ in the component $N_{f^n x}^i$.  We estimate as in \eqref{normS}
 $$
 \| N^s_{f^n x} \left(  \pd ^{(1)}_{x}(t')\right) \| \le 
\| N_{f^n x}\| \cdot  \| (t') \|^k \cdot  e^{n \sum s_j (\xs _j +\e)} 
$$
Since no term in $N_{f^n x}^i$ is a sub-resonance one, we have $\xf _i > \sum s_j \xs _j$.  We can choose a sufficiently small $\e>0$
so that $  \xf _i  > \sum s_j \xs_j + (n+2)\e$  for all such relations.
Hence the left side of \eqref{NRcom}  at $t'$ can be estimated as
$$
\| N^s_{f^n x} \left(  \pd ^{(1)}_{x}(t')\right) \| \le C'  e^{n (\xf _i -2\e)}, 
$$
where the constant $C'$ does not depend on $n$ since the norms of 
$G^{(k)}_{f^n x,f^n y}$, and hence those of $N_{f^n x}$, are uniformly bounded.
This contradicts \eqref{NRcom} and \eqref{right} for large $n$.
\vskip.1cm

Thus we shown that the Taylor series  $G_{x,y}$  of $\,\G_{x,y}$  at $0$ contains
only sub-resonance generated terms, and in particular it is a finite polynomial. 
It remains to show that $\G_{x,y}$ coincides with its Taylor series.

In addition to \eqref{hcom} we have the same relation for their finite Taylor
series
 \begin{equation}\label{Hcom}
G_{f^n x,f^n y} \circ \p ^n_x =  \p ^n_y  \circ G_{x,y}
\end{equation}
and hence denoting $\Delta_n= \G_{f^n x,f^n y}-G_{f^n x,f^n y}$
we obtain
 \begin{equation}\label{Hcom2}
 \Delta_0 = (\p ^n_y ) ^{-1} \circ \Delta_n \circ \p ^n_x   
\end{equation}
 Below we show that the right hand side of \eqref{Hcom2}  tends to 0 as $n \to \infty$ and thus $\G_{x,y}=G_{x,y}$, completing the proof.
\vskip.05cm 
Since $\Delta_n$ is infinitely flat at $0\in \E_{f^n x}$, for each $k$ there exists
$C_{k,\delta}$ such that 
\begin{equation}\label{Ck,delta}
\| \Delta_n (z)\| \le C_{k,\delta} \| z\| ^k \quad\text{for all }
z \in  \E_{f^n x}  \text{ with } \| z\| \le \delta.
\end{equation}
We can choose the constant $C_{k,\delta}$ independent of $n$ since $\G_{x,y}$ depends continuously on $(x,y)$ in $C^\infty$ topology and $f^n x$ remains close to $f^n y$. We fix $k$ and $\e$ such that 
\begin{equation} \label{k,e}
  k \xs _l   +2(k+1)\e < \xf _1.
  \end{equation}
We  estimate  $ (\p ^n_y ) ^{-1}$ and  $ \p ^n_x $ as follows:
\begin{equation}\label{P est}
\| (\p ^n_y ) ^{-1} (t) \| \le e^{-n (\xf _1 -2\e)} \| t\|  \quad \text{and} \quad
\| (\p ^n_x ) (t) \| \le e^{n  (\xs _l +2\e)} \| t\| 
\end{equation}
for any $n \in \N$ provided that sizes of both the input $\| t\|$ and the outputs
$\|(\p ^n_y ) (t) \|$ and $\|(\p ^n_y ) ^{-1} (t) \|$ are at most $\delta$. 
Both estimates follow  from the fact that for any $x \in \M$ 
$$ \| D_0 \p _x  \| = \| \fd _x  \| \le e^{\xs _l +\e} \quad \text{and} \quad
\| D_0 (\p _x  ) ^{-1}   \| = \| (\fd _x)  ^{-1}   \| \le e^{-\xf _1 +\e} 
$$
by \eqref{nbrate} and hence we have similar estimates for derivatives 
at all points $t \in \E_x$ with $\| t\| \le \delta$ for sufficiently small $\delta >0$:
$$ \| D_t \p _x  \| \le e^{\xs _l +2\e} \quad \text{and} \quad 
\| D_t (\p _x  ) ^{-1}   \|  \le e^{-\xf _1 +2\e}. 
$$
Then the bound for $\| (\p ^n_x ) (t) \|$ follows by simply estimating 
its derivative as the trajectory product, as long as all points involved
are $\delta$ close to $0$.

Finally using \eqref{Hcom2}, \eqref{Ck,delta}, and \eqref{P est}  we estimate
$$
\begin{aligned}
\| \Delta_0 (t) \| &\,\le \,
\| (\p ^n_y ) ^{-1} \|\cdot \| \Delta_n \circ \p ^n_x (t)\| \,\le \,
e^{-n (\xf _1 -2\e)} C_{k,\delta} \left( e^{n  (\xs _l +2\e)} \| t\| \right)^k \,=\\
&\,= \,   e^{n \left (-\xf _1 + k \xs _l   +2(k+1)\e \right)} \,C_{k,\delta}\, \| t\| ^k
\, \to 0 \quad\text{as } n\to\infty
\end{aligned}
$$
by \eqref{k,e}. Thus, $\Delta_0=0$, i.e. 
the map $\G_{x,y}$ coincides with its Taylor series.
\vskip.05cm

This completes the proof of Theorem \ref{Main}
$\QED$

%$$\fe(x,t)= (f(x),\f_x(t))  \qquad \pe(x,t)= (f(x),\p_x(t)) \qquad  \he (x,t)=(x,\h_x(t))$$
%  \qquad   \Gamma (x,t)=(g(x),\mathcal \g _x(t)) 
%formal power series at $t=0$: $\f_x(t)= \sum_{n=0}^\infty \fd^{(n)}(x,t) $, $\h_x(t)= \sum_{n=0}^\infty \hd^{(n)}(x,t)$ .

%%%%%%%%%%%%%%%%%%%%%%%%%%%%%%%%%%
%%%%%%%%%%%%%%%%%%%%%%%%%%%%%%%%%%
%%%%%%%%%%%%%%%%%%%%%%%%%%%%%%%%%%

%\vskip1cm

\end{document}